\documentclass{amsart}
\usepackage{colonequals}
\usepackage[alphabetic]{amsrefs}
\usepackage{longtable} % Required for multi-page tables
\usepackage{adjustbox} % For centering and resizing
\usepackage{enumerate}
\usepackage{adjustbox}
\usepackage[shortlabels]{enumitem}
\usepackage{amssymb}
\usepackage[linesnumbered,ruled,vlined]{algorithm2e}
\usepackage{fullpage}
\usepackage{xcolor}
\usepackage[colorlinks=true, pdfstartview=FitH, linkcolor=blue, citecolor=blue, urlcolor=blue]{hyperref}
\newtheorem{theorem}{Theorem}%[section]
\newtheorem{lemma}[theorem]{Lemma}

\theoremstyle{definition}

\newtheorem{question}[theorem]{Question}
\newtheorem{problem}[theorem]{Problem}
\newtheorem{conjecture}[theorem]{Conjecture}

\theoremstyle{remark}

\numberwithin{equation}{section}

\newcommand{\F}{\operatorname{\mathbb{F}}}

\allowdisplaybreaks

\makeatletter
\@namedef{subjclassname@2020}{%
  \textup{2020} Mathematics Subject Classification}
  \makeatother

\begin{document}

\title{Linear system of geometrically irreducible plane cubics over finite fields}

\author{Shamil Asgarli}
\address{Department of Mathematics and Computer Science \\ Santa Clara University \\ 500 El Camino Real \\ USA 95053}
\email{sasgarli@scu.edu}

\author{Dragos Ghioca}
\address{Department of Mathematics, University of British Columbia, Vancouver, BC V6T 1Z2}
\email{dghioca@math.ubc.ca}

\subjclass[2020]{Primary 14N05; Secondary 14C21, 14H50, 14G15}
\keywords{linear system, cubic curves, geometric irreducibility, finite fields}

\begin{abstract}
We examine the maximum dimension of a linear system of plane cubic curves whose $\mathbb{F}_q$-members are all geometrically irreducible. Computational evidence suggests that such a system has a maximum (projective) dimension of $3$. As a step towards the conjecture, we prove that there exists a $3$-dimensional linear system $\mathcal{L}$ with at most one geometrically reducible $\mathbb{F}_q$-member.
\end{abstract}

\maketitle

\section{Introduction} 

Let $\mathcal{P}$ describe a property of a degree $d$ algebraic hypersurface in $\mathbb{P}^n$. In algebraic geometry and adjacent fields, we are often interested in measuring the likelihood of the property $\mathcal{P}$ for a ``randomly chosen" hypersurface. When working over an infinite field, we can use Zariski dense open sets to show that property $\mathcal{P}$ holds generically. However, the situation over finite fields is more subtle since open sets in the relevant parameter space may not have any $\mathbb{F}_q$-points (despite being Zariski dense over $\overline{\mathbb{F}_q}$).

There are alternative methods to quantify how widespread a property $\mathcal{P}$ holds for hypersurfaces over finite fields. One method is to count the proportion of degree $d$ hypersurfaces over $\mathbb{F}_q$ satisfying $\mathcal{P}$, and consider the asymptotic proportion (either as $d\to \infty$ or $q\to\infty$). As another natural metric, we can ask for the maximum size of a linear family $\mathcal{P}$ can carry. More precisely, we can pose the following question for each finite field $\mathbb{F}_q$, and positive integers $d$ and $n$.

\begin{question}\label{quest:general}
What is the maximum value of $t\in\mathbb{N}$ for which there exist $\{F_i=0\}$ for $i=0, 1, ..., t$ such that $X_{[a_0:\cdots: a_t]} = \{a_0 F_0 + \cdot + a_t F_t = 0\}$  satisfies the property $\mathcal{P}$ for all choices $[a_0:a_1:\ldots:a_t]\in\mathbb{P}^{t}(\mathbb{F}_q)$?
\end{question}

The question can be rephrased in the language of linear systems: what is the largest (projective) dimension of a linear system $\mathcal{L}\cong \mathbb{P}^t$ of degree $d$ hypersurfaces in $\mathbb{P}^n$ such that each $\mathbb{F}_q$-member of $\mathcal{L}$ satisfies $\mathcal{P}$? An answer to Question~\ref{quest:general} measures the extent to which the property  $\mathcal{P}$ linearly propagates in the parameter space of all degree $d$ hypersurfaces in $\mathbb{P}^n$. Larger values of $t$ indicate higher levels of prevalence for $\mathcal{P}$. 

Question~\ref{quest:general} has been addressed in recent works for several specific choices of $\mathcal{P}$. For instance, $\mathcal{P}$ may correspond to the property of being smooth \cite{AGR23}, irreducible over $\mathbb{F}_q$ \cite{AGR24}, reducible over $\mathbb{F}_q$ \cite{AGR24}, or nonblocking with respect to $\mathbb{F}_q$-lines \cite{AGY23}. To illustrate some of these results, we specialize to the setting of cubic plane curves, that is, $d = 3$ and $n = 2$. Below are two concrete examples from the recent literature where Question~\ref{quest:general} has a known answer. 

\begin{theorem}[\cite{AGR23}]\label{thm:intro-smooth-cubic-plane} Let $\mathbb{F}_q$ be a finite field with characteristic $p\neq 3$. Then there is a $2$-dimensional linear system $\mathcal{L}_{\text{smo}}=\langle F_0, F_1, F_2\rangle\cong \mathbb{P}^2$ of cubic plane curves such that every $\mathbb{F}_q$-member of $\mathcal{L}_{\text{smo}}$ is smooth. Moreover, no such $3$-dimensional system exists.
\end{theorem}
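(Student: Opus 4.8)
The plan is to treat the two assertions separately, since they call for genuinely different ideas: ruling out a $3$-dimensional system is a short linear-algebra argument, whereas producing the $2$-dimensional system is where the real work lies. Throughout I will use that a plane cubic $F$ is singular at a point $P$ exactly when its three partials vanish there: since $p \neq 3$, Euler's identity $3F = x\,\partial_x F + y\,\partial_y F + z\,\partial_z F$ forces $F(P) = 0$ once $\partial_x F(P) = \partial_y F(P) = \partial_z F(P) = 0$, so the value condition is redundant.

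For the non-existence direction I would argue by contradiction. Suppose $W \subseteq \mathbb{F}_q[x,y,z]_3$ is a $4$-dimensional $\mathbb{F}_q$-vector space whose projectivization $\mathbb{P}(W) \cong \mathbb{P}^3$ has every $\mathbb{F}_q$-member smooth. Fix any point $P \in \mathbb{P}^2(\mathbb{F}_q)$. Each of the three conditions $\partial_x F(P) = 0$, $\partial_y F(P) = 0$, $\partial_z F(P) = 0$ is $\mathbb{F}_q$-linear in $F \in W$, so the forms in $W$ singular at $P$ constitute an $\mathbb{F}_q$-subspace of dimension at least $4 - 3 = 1$. Hence there is a nonzero $F \in W$, defined over $\mathbb{F}_q$ and singular at $P$, whose associated $\mathbb{F}_q$-member is not smooth, a contradiction. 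In fact this shows that \emph{every} $3$-dimensional system already carries a singular $\mathbb{F}_q$-member.

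For the existence direction I would build a twisted diagonal (norm) construction. The starting observation is that a diagonal cubic $a x^3 + b y^3 + c z^3$ has partials $3ax^2, 3by^2, 3cz^2$, which (as $p \neq 3$) share no projective zero precisely when $abc \neq 0$; thus in the net $\langle x^3, y^3, z^3\rangle$ the singular members are exactly those on the coordinate lines $abc = 0$, which carry $\mathbb{F}_q$-points. The idea is to twist by Galois so that this ``bad locus'' becomes a norm form with no nontrivial $\mathbb{F}_q$-zero. Let $\theta$ generate $\mathbb{F}_{q^3}$ over $\mathbb{F}_q$ with Frobenius $\phi$, and set $L_0 = x + \theta y + \theta^2 z$ together with its conjugates $L_1 = \phi(L_0)$ and $L_2 = \phi^2(L_0)$; the Vandermonde determinant in the distinct $\theta, \theta^q, \theta^{q^2}$ makes $L_0, L_1, L_2$ independent, so over $\overline{\mathbb{F}_q}$ the net $\langle L_0^3, L_1^3, L_2^3\rangle$ is a coordinate change of $\langle x^3, y^3, z^3\rangle$. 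Since $\phi$ cyclically permutes the basis $L_0^3, L_1^3, L_2^3$, this net is Frobenius-stable and descends to a net $\mathcal{L}_{\text{smo}}$ over $\mathbb{F}_q$, and I would record an explicit $\mathbb{F}_q$-rational basis (symmetric combinations of the $L_i^3$) to present it concretely.

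The crux is the descent bookkeeping that pins down the $\mathbb{F}_q$-members. Tracking the Frobenius action shows that a combination is $\mathbb{F}_q$-rational exactly when it has the shape $\alpha L_0^3 + \phi(\alpha) L_1^3 + \phi^2(\alpha) L_2^3$ for some $\alpha \in \mathbb{F}_{q^3}$, and these exhaust the $\mathbb{F}_q$-members as $\alpha$ runs over $\mathbb{F}_{q^3}^{\times}/\mathbb{F}_q^{\times}$, which has the correct cardinality $q^2 + q + 1 = |\mathbb{P}^2(\mathbb{F}_q)|$. In the $L$-coordinates such a member is the diagonal cubic with coefficients $\alpha, \phi(\alpha), \phi^2(\alpha)$, so by the computation above it is singular if and only if $\alpha \cdot \phi(\alpha) \cdot \phi^2(\alpha) = N_{\mathbb{F}_{q^3}/\mathbb{F}_q}(\alpha) = 0$, that is, if and only if $\alpha = 0$, which is excluded. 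Hence every $\mathbb{F}_q$-member is smooth. I expect the main obstacle to be exactly this descent step: verifying that the twisted net is genuinely defined over $\mathbb{F}_q$ and that its $\mathbb{F}_q$-members are \emph{precisely} the norm-parametrized diagonal cubics, with no larger family sneaking in a member that could be singular.
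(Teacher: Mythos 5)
This statement is quoted from \cite{AGR23} and is not reproved anywhere in the present paper, so there is no internal proof to compare against; your argument is correct, and it is essentially the same approach as the cited source: the non-existence half is the standard dimension count (three linear conditions on a $4$-dimensional space of forms, with Euler's identity $3F = x\partial_x F + y\partial_y F + z\partial_z F$ making the vanishing of $F$ at $P$ redundant since $p\neq 3$), and the existence half is the Galois-twisted diagonal (norm-form) net. Note also that your construction $\langle L_0^3, L_1^3, L_2^3\rangle$ with Frobenius-conjugate linear forms and $\mathbb{F}_q$-members parametrized by $\alpha\mapsto \alpha L_0^3+\phi(\alpha)L_1^3+\phi^2(\alpha)L_2^3$, singular exactly when $N_{\mathbb{F}_{q^3}/\mathbb{F}_q}(\alpha)=0$, is precisely the twisting technique this paper itself deploys in Section~\ref{sec:second-proof} to prove Theorem~\ref{thm:main}, there applied to the system $\langle x^2y, y^2z, z^2x, xyz\rangle$ via Lemma~\ref{lem:main}, so your proposal aligns both with \cite{AGR23} and with the paper's own methods.
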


\begin{theorem}[\cite{AGR24}]\label{thm:intro-irred-cubic-plane} Let $\mathbb{F}_q$ be a finite field. Then there is a $3$-dimensional linear system $\mathcal{L}_{\text{irr}}=\langle F_0, F_1, F_2, F_3\rangle\cong \mathbb{P}^3$ of cubic plane curves such that every $\mathbb{F}_q$-member of $\mathcal{L}_{\text{irr}}$ is irreducible over $\mathbb{F}_q$. Moreover, no such $4$-dimensional system exists.
\end{theorem}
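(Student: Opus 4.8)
The plan is to treat the two assertions separately, translating reducibility over $\mathbb{F}_q$ into a statement about restrictions of cubics to lines. For a cubic $C=\{G=0\}$ defined over $\mathbb{F}_q$, observe first that $C$ is reducible over $\mathbb{F}_q$ if and only if $G$ has an $\mathbb{F}_q$-rational linear factor: if $G$ splits over $\mathbb{F}_q$ then it has a factor of degree $1$ or $2$, and in the degree-$2$ case the complementary factor is an $\mathbb{F}_q$-line, so in all cases an $\mathbb{F}_q$-linear factor appears; conversely, a cubic that is a product of three Galois-conjugate lines over $\mathbb{F}_{q^3}$ has no $\mathbb{F}_q$-linear factor and is irreducible over $\mathbb{F}_q$. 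Consequently, ``every $\mathbb{F}_q$-member of $\mathcal{L}=\langle F_0,\dots,F_3\rangle$ is irreducible over $\mathbb{F}_q$'' is equivalent to the assertion that no nonzero combination $\sum a_i F_i$ with $[a_0:\cdots:a_3]\in\mathbb{P}^3(\mathbb{F}_q)$ is divisible by a linear form defined over $\mathbb{F}_q$.

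Fixing an $\mathbb{F}_q$-line $\ell=\{L=0\}$, consider the restriction map $r_\ell\colon \langle F_0,\dots,F_3\rangle \to H^0(\ell,\mathcal{O}_\ell(3))$. Since $\sum a_i F_i$ is divisible by $L$ exactly when $\sum a_i\,(F_i|_\ell)=0$, and $\dim H^0(\ell,\mathcal{O}_\ell(3))=4$, the condition that $\ell$ is a component of no member is precisely that $r_\ell$ is a linear isomorphism. Letting $\ell$ vary over the dual plane $(\mathbb{P}^2)^\vee$, the restrictions $F_i|_\ell$ become four sections $\sigma_0,\dots,\sigma_3$ of the rank-$4$ bundle $E=\rho_*\pi^*\mathcal{O}_{\mathbb{P}^2}(3)\cong\operatorname{Sym}^3 U^\vee$, where $U$ is the tautological rank-$2$ subbundle on the point--line incidence variety; $r_\ell$ degenerates exactly along the zero locus of $\sigma_0\wedge\cdots\wedge\sigma_3\in H^0(\det E)$. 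A Chern-class computation gives $c_1(E)=c_1(\operatorname{Sym}^3 U^\vee)=6H$, so the ``bad lines'' form a plane sextic $\{D=0\}\subset(\mathbb{P}^2)^\vee$. The first assertion thus reduces to constructing $F_0,\dots,F_3$ over $\mathbb{F}_q$ for which this sextic carries no $\mathbb{F}_q$-rational point.

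For existence I would exhibit the $F_i$ explicitly and uniformly in $q$, engineering $D$ to be anisotropic. Here lies the main difficulty: the construction cannot make $\{D=0\}$ geometrically irreducible, because by the Weil bound a geometrically irreducible plane sextic over $\mathbb{F}_q$ acquires $\mathbb{F}_q$-points once $q$ is large, so for the family to succeed over every $\mathbb{F}_q$ the sextic must be geometrically reducible, with its components defined over and cyclically permuted by $\Gal(\mathbb{F}_{q^k}/\mathbb{F}_q)$ for a suitable $k$, admitting no rational component and no rational intersection point. The natural target is to arrange $D$ as a norm form, e.g.\ $D=N_{\mathbb{F}_{q^3}/\mathbb{F}_q}(\,\cdot\,)$ of an anisotropic ternary cubic (or a product of two $\mathbb{F}_{q^2}$-conjugate anisotropic cubics), using a fixed $\mathbb{F}_{q^3}$-structure to define the $F_i$. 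I would then compute $D$ for this ansatz, identify it as such a norm form, and verify both that it has no nontrivial $\mathbb{F}_q$-zero and that the conjugate components meet only in Galois-conjugate (non-rational) points, checking small $q$ and small characteristic separately.

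The second assertion follows immediately from the same restriction picture, now read as a dimension count. For a fixed $\mathbb{F}_q$-line $\ell=\{L=0\}$, the cubics containing $\ell$ are the products $L\cdot Q$ with $Q$ a conic, and they form a linear subspace $\mathbb{P}^5_\ell\subset\mathbb{P}^9$ defined over $\mathbb{F}_q$. Given any linear system $\mathcal{L}\cong\mathbb{P}^4\subset\mathbb{P}^9$ defined over $\mathbb{F}_q$, the underlying $\mathbb{F}_q$-subspaces $A,B\subset\mathbb{F}_q^{10}$ satisfy $\dim A+\dim B=5+6=11>10$, so $A\cap B\neq 0$; being the intersection of $\mathbb{F}_q$-subspaces it contains a nonzero $\mathbb{F}_q$-vector, whence $\mathcal{L}\cap\mathbb{P}^5_\ell$ has an $\mathbb{F}_q$-point. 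That point is a member of $\mathcal{L}$ having $\ell$ as a component, hence reducible over $\mathbb{F}_q$, which rules out every $4$-dimensional system and completes the upper bound.
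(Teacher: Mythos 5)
Your reduction machinery is sound, and the second half of your argument is complete: for a plane cubic over $\mathbb{F}_q$, reducibility over $\mathbb{F}_q$ is indeed equivalent to divisibility by an $\mathbb{F}_q$-rational linear form (any factorization of a cubic involves a degree-$1$ factor), the cubics divisible by a fixed linear form $L$ form a $6$-dimensional $\mathbb{F}_q$-subspace of the $10$-dimensional space of cubic forms, and $5+6=11>10$ forces a nonzero $\mathbb{F}_q$-rational intersection with any $4$-dimensional system. This is essentially the same dimension count that proves the upper bound in \cite{AGR24}*{Theorem 1.3(d)}. Likewise, your dual-plane reformulation of the existence half is correct: the locus of lines $\ell$ for which $F_0|_\ell,\dots,F_3|_\ell$ become dependent is the vanishing of a section of $\det(\operatorname{Sym}^3 U^\vee)$, a sextic in $(\mathbb{P}^2)^\vee$, and the existence assertion is equivalent to making this sextic pointless over $\mathbb{F}_q$.

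The genuine gap is that the existence half is never proved: everything from ``For existence I would exhibit the $F_i$ explicitly'' onward is a program, not an argument. You do not produce the four cubics, do not compute $D$ for any ansatz, do not show that a determinant of restrictions can actually be arranged to be a norm form $N_{\mathbb{F}_{q^3}/\mathbb{F}_q}$ of an anisotropic cubic, and do not rule out rational points at the intersections of the conjugate components --- a Galois-stable set which can a priori contain $\mathbb{F}_q$-points. Your own Weil-bound observation shows this is exactly where the arithmetic difficulty sits (it is the same ``pointless curve in the parameter space'' difficulty the paper emphasizes for Conjecture~\ref{conj:main}), so it cannot be deferred. The cited proof takes a different route: by \cite{AGR24}*{Theorem 1.1} there is a point $P\in\mathbb{P}^2(\mathbb{F}_{q^6})$ lying on no conic defined over $\mathbb{F}_q$ (existence established by a counting argument, hence nonconstructively), and $\mathcal{L}_{\text{irr}}$ is taken to be the $3$-dimensional system of cubics through the six-point Galois orbit $S$ of $P$. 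If an $\mathbb{F}_q$-member had an $\mathbb{F}_q$-linear factor $L$ with residual conic $Q$ (also over $\mathbb{F}_q$), then Galois stability of $L$ would force either $S\subset L$, putting $P$ on the $\mathbb{F}_q$-conic $L^2=0$, or $S\subset Q$; both contradict the choice of $P$. Your norm-form ansatz could conceivably be developed into an explicit alternative (in the spirit of the paper's Section~\ref{sec:second-proof}), but as written the first assertion of the theorem remains unestablished.
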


Let us compare the two properties in Theorems~\ref{thm:intro-smooth-cubic-plane} and~\ref{thm:intro-irred-cubic-plane}. The smoothness condition refers to geometric smoothness, while irreducibility over $\mathbb{F}_q$ does not necessarily imply geometric irreducibility (i.e., irreducibility over $\overline{\mathbb{F}_q}$). This distinction naturally raises the goal of establishing a version of Theorem~\ref{thm:intro-irred-cubic-plane} where the conclusion is strengthened from irreducibility over $\mathbb{F}_q$ to irreducibility over $\overline{\mathbb{F}_q}$. The purpose of the present paper is to address this objective.

To summarize, we address Question~\ref{quest:general} when $\mathcal{P}$ stands for ``is geometrically irreducible" (that is, irreducible over $\overline{\mathbb{F}_q}$) for cubic plane curves: $d=3$ and $n=2$. In this special case, every linear system $\mathcal{L}$ of (projective) dimension $4$ has an $\mathbb{F}_q$-member that is a reducible plane cubic over $\mathbb{F}_q$ by Theorem~\ref{thm:intro-irred-cubic-plane}; see \cite{AGR24}*{Theorem 1.3(d)} for further details. Hence, the answer to Question~\ref{quest:general} in this setting is at most $3$. We predict that the answer is exactly $3$.

\begin{conjecture}\label{conj:main} There exists a linear system $\mathcal{L}=\langle F_0, F_1, F_2, F_3\rangle\cong\mathbb{P}^3$ of cubic plane curves where each $\mathbb{F}_q$-member of $\mathcal{L}$ is geometrically irreducible.
\end{conjecture}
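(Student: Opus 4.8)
The plan is to reformulate the conjecture as the avoidance of a single reducibility locus and then to control its $\mathbb{F}_q$-points. Working inside the $\mathbb{P}^9$ of plane cubics, let $\mathcal{R}\subset\mathbb{P}^9$ be the image of the multiplication map $\check{\mathbb{P}}^2\times\mathbb{P}^5\to\mathbb{P}^9$ sending a (line, conic) pair to its product; this is an irreducible, $\mathbb{F}_q$-rational variety of dimension $7$ parametrizing the geometrically reducible cubics, and a short Chern-class count gives $\deg\mathcal{R}=(H_1+H_2)^7=\binom{7}{2}=21$. Because $\mathcal{R}$ is defined over $\mathbb{F}_q$, a member $F_a=a_0F_0+\cdots+a_3F_3$ with $a\in\mathbb{P}^3(\mathbb{F}_q)$ is geometrically reducible if and only if $F_a\in\mathcal{R}(\mathbb{F}_q)$. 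Thus Conjecture~\ref{conj:main} is equivalent to producing an $\mathbb{F}_q$-rational $3$-plane $\Lambda\cong\mathbb{P}^3\subset\mathbb{P}^9$ for which $\Gamma:=\Lambda\cap\mathcal{R}$ carries no $\mathbb{F}_q$-point. For general $\Lambda$ the intersection $\Gamma$ is a curve (since $3+7-9=1$) of degree $21$, and pushing the incidence $\{(L,F):L\subset\{F=0\}\}$ to $\check{\mathbb{P}}^2$ identifies $\Gamma$ birationally with the degeneracy locus $D=\{\det\phi_\Lambda=0\}$ of the restriction map $\phi_\Lambda\colon \Lambda\otimes\mathcal{O}\to\operatorname{Sym}^3\mathcal{S}^\vee$, where $\mathcal{S}$ is the tautological rank-$2$ bundle of lines; as $c_1(\operatorname{Sym}^3\mathcal{S}^\vee)=6H$, the curve $D\subset\check{\mathbb{P}}^2$ is a plane sextic. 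So I must force a degree-$6$ plane curve to behave as if pointless over $\mathbb{F}_q$.

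Next I would stratify $\Gamma(\mathbb{F}_q)$ by the arithmetic of the line component. A Galois-stable factorization of an $\mathbb{F}_q$-cubic shows that a geometrically reducible $\mathbb{F}_q$-cubic is either (a) a product $\ell\cdot Q$ with $\ell$ an $\mathbb{F}_q$-rational line — equivalently, it is reducible over $\mathbb{F}_q$ — or (b) a product $\ell\cdot\ell^\sigma\cdot\ell^{\sigma^2}$ of three $\Gal(\mathbb{F}_{q^3}/\mathbb{F}_q)$-conjugate lines. Type (a) members are exactly those parametrized by $D(\mathbb{F}_q)$, while type (b) members are precisely the degree-$3$ closed points of $D$ whose three conjugate lines all lie on a single $\mathbb{F}_q$-rational member (the points where the birational map $\Gamma\dashrightarrow D$ folds a Frobenius triple onto one rational point). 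The value of this dichotomy is that Theorem~\ref{thm:intro-irred-cubic-plane} already supplies a web $\Lambda$ with no type (a) members, i.e. with $D(\mathbb{F}_q)=\emptyset$; the genuinely new task is to eliminate, simultaneously, the finitely many type (b) members.

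For the construction I would aim to force $D$ to be geometrically reducible with Frobenius permuting its components in orbits of size $\ge 2$ and no $\mathbb{F}_q$-rational component. This is essentially unavoidable: a geometrically irreducible sextic of bounded genus acquires an $\mathbb{F}_q$-point once $q$ is large by the Weil bound, which would reinstate a type (a) member, so any $\Lambda$ valid for large $q$ must split $D$ over $\overline{\mathbb{F}_q}$ into conjugate pieces. Concretely I would choose $F_0,\dots,F_3$ — possibly in families indexed by $q\bmod 3$, since the existence of $\mathbb{F}_{q^3}$-lines and cube structure is governed by that residue — so that $\det\phi_\Lambda$ factors into a transitive Frobenius orbit of forms; then $D(\mathbb{F}_q)$ is confined to the finite intersection of conjugate components, which I would arrange to be empty or irrational (killing type (a)), and on the same family I would verify that no conjugate line-triple multiplies into $\Lambda$ (killing type (b)). A parameter count supports feasibility: the type (b) cubics form $\approx q^6/3$ points of $\mathbb{P}^9(\mathbb{F}_q)$, so a random $\Lambda\cong\mathbb{P}^3$ meets them in $O(1)$ points on average, matching the ``at most one'' bound proved in this paper.

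The hard part will be removing that last exceptional member uniformly in $q$. The heuristic average number of reducible $\mathbb{F}_q$-members of a web is $\approx q$, namely the expected count of $\mathbb{F}_q$-points on the degree-$21$ curve $\Gamma\subset\mathbb{P}^3$, so one is working below the average, and the authors' result localizes the obstruction to a single residual point. I expect the decisive difficulty to be a Galois- or parity-type constraint on the sextic $D$ (equivalently on $\det\phi_\Lambda$) that may force an odd number of rational incidences in certain residue classes of $q$; overcoming it would require either an explicit $q$-dependent family whose determinant has no self-conjugate degree-$3$ point, or a deformation argument inside the $24$-dimensional Grassmannian $\operatorname{Gr}(4,10)(\mathbb{F}_q)$ showing that the ``exactly one'' stratum is always escapable by an $\mathbb{F}_q$-rational perturbation. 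Securing one of these is exactly the crux separating Conjecture~\ref{conj:main} from the theorem established here.
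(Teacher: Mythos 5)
Your proposal is not a proof of Conjecture~\ref{conj:main}; it is a (largely sound) reduction of the problem that stops exactly where the real difficulty begins, as you yourself concede in your final sentence. The setup is fine: the reformulation via the reducibility locus in $\mathbb{P}^9$ is the same one given in the paper's introduction; the dichotomy between (a) members reducible over $\mathbb{F}_q$ (equivalently, having an $\mathbb{F}_q$-rational line factor) and (b) unions of three $\Gal(\mathbb{F}_{q^3}/\mathbb{F}_q)$-conjugate lines is exactly the dichotomy used in Section~\ref{sec:first-proof}; and invoking Theorem~\ref{thm:intro-irred-cubic-plane} to kill type (a) is correct, since an $\mathbb{F}_q$-point of your sextic $D$ yields, via the kernel of $\phi_\Lambda$ computed over $\mathbb{F}_q$, an $\mathbb{F}_q$-member with an $\mathbb{F}_q$-line factor, and conversely. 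Your numerology ($\deg\mathcal{R}=21$, $c_1(\operatorname{Sym}^3\mathcal{S}^\vee)=6H$, roughly $q^6/3$ type (b) cubics, average of $\approx q$ reducible members per web) also checks out. But none of this eliminates a single type (b) member. The step ``choose $F_0,\dots,F_3$ so that $\det\phi_\Lambda$ factors into a transitive Frobenius orbit of forms, and verify that no conjugate line-triple multiplies into $\Lambda$'' is a wish, not an argument: you give no construction, no existence proof, and no mechanism ruling out the self-conjugate degree-$3$ points of $D$ that you correctly identify as the remaining obstruction.

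This gap is genuine and is precisely why the statement is a conjecture, not a theorem, in the paper: there is no proof of it to compare against. The paper proves only the weaker Theorem~\ref{thm:main}, and both of its proofs --- the Galois-orbit construction of Section~\ref{sec:first-proof} and the explicit normal-basis construction of Section~\ref{sec:second-proof} --- land exactly on the state your proposal reaches: no type (a) members, but one surviving type (b) member (the triple $\overline{P_0P_3}\cup\overline{P_1P_4}\cup\overline{P_2P_5}$ in the first proof, the curve $\{T=0\}$ in the second). The conjecture itself is supported only by machine verification for $q\le 11$. Note moreover that both of the paper's constructions are structurally incapable of doing better: any web of cubics through a degree-$6$ Galois orbit necessarily contains that one Galois-stable triple of lines, so a proof of the conjecture requires a mechanism for avoiding type (b) members that is genuinely different from anything in the paper --- which is exactly the piece your proposal defers. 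One useful positive remark: your observation that, for large $q$, any valid $\Lambda$ must have $D$ geometrically reducible with Frobenius acting on its components without fixed $\mathbb{F}_q$-rational component (via Weil-type bounds for geometrically irreducible plane curves) is a correct necessary condition; but it only constrains where to look, and supplies no sufficiency.
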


As mentioned earlier, Conjecture~\ref{conj:main} strengthens Theorem~\ref{thm:intro-irred-cubic-plane}. As partial progress, we establish the following.

\begin{theorem}\label{thm:main}
There exists a linear system $\mathcal{L}=\langle F_0, F_1, F_2, F_3\rangle$ of cubic plane curves where each $\mathbb{F}_q$-member of $\mathcal{L}$ is irreducible over $\mathbb{F}_q$ and there is at most one geometrically reducible $\mathbb{F}_q$-member of $\mathcal{L}$.
\end{theorem}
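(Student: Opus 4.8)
The plan is to reduce the theorem to a combinatorial statement about a Galois orbit of six points, and then to take for $\mathcal{L}$ the net of cubics passing through a single closed point of degree $6$.

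First I would record the classification that drives everything. If a plane cubic $C$ over $\mathbb{F}_q$ is irreducible over $\mathbb{F}_q$ but reducible over $\overline{\mathbb{F}_q}$, then its geometric irreducible components form a single $\Gal(\overline{\mathbb{F}_q}/\mathbb{F}_q)$-orbit and hence all have the same degree; since the degrees sum to $3$, the only possibility is three lines conjugate over $\mathbb{F}_{q^3}$. Equivalently, $C$ is the ``norm cubic'' $m\cdot m^{(q)}\cdot m^{(q^2)}$ attached to a closed point of degree $3$ of the dual plane $\check{\mathbb{P}}^2$. Consequently, as soon as every $\mathbb{F}_q$-member of $\mathcal{L}$ is irreducible over $\mathbb{F}_q$, its geometrically reducible $\mathbb{F}_q$-members are exactly its norm-cubic members, and the theorem reduces to producing a three-dimensional $\mathcal{L}$ with no $\mathbb{F}_q$-reducible member and at most one norm-cubic member.

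For the construction I would fix a closed point $Z$ of $\mathbb{P}^2$ of degree $6$ whose six geometric points $P_1,\dots,P_6$ form one Galois orbit, so that Frobenius acts as a single $6$-cycle $P_i\mapsto P_{i+1}$, and I would take $Z$ in general position: no three of the $P_i$ collinear and the six not on a conic. Then $Z$ imposes independent conditions on cubics and $\mathcal{L}=\{C: Z\subset C\}\cong\mathbb{P}^3$. To see that no member is reducible over $\mathbb{F}_q$, note that any such member would have an $\mathbb{F}_q$-rational line factor $\ell$; then $\ell\cap Z$ is Frobenius-stable, but $Z$ is a single orbit, so $\ell\cap Z$ is empty or all of $Z$. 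It cannot be all of $Z$ (six points are not collinear), so $\ell$ misses $Z$ and the complementary conic factor must pass through all six points, contradicting that $Z$ lies on no conic. To count the norm-cubic members, observe that a norm cubic $m\, m^{(q)} m^{(q^2)}\in\mathcal{L}$ distributes the six points over three conjugate lines; the hypothesis that no three $P_i$ are collinear forces each line to carry exactly two points, so such a member corresponds to a Frobenius-invariant perfect matching of $\{P_1,\dots,P_6\}$ on which Frobenius induces a $3$-cycle. A direct check shows the only invariant matching of a $6$-cycle is the ``diameter'' matching $\{i,i+3\}$, giving the single member $\overline{P_1P_4}\cdot\overline{P_2P_5}\cdot\overline{P_3P_6}$; its three lines are distinct and cyclically permuted by Frobenius, so it is one norm-cubic member, and it is the only one.

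The part I expect to be the main obstacle is not any of the verifications above, which are short, but the \emph{existence} of a degree-$6$ closed point in the required general position over every finite field $\mathbb{F}_q$ --- in particular for very small $q$, where $\mathbb{P}^2(\mathbb{F}_{q^6})$ has few orbits and the general-position constraints (no three conjugates collinear, not all six on a conic, independent conditions on cubics) are genuinely restrictive. I would treat the generic range by a dimension count showing that these bad loci have strictly smaller dimension than the parameter space of degree-$6$ points, and dispose of the finitely many small $q$ either by exhibiting an explicit orbit or by a refined point count; making this existence argument uniform in $q$ is the delicate step.
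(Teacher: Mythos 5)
Your construction and all of your verification steps track the paper's first proof (Section~\ref{sec:first-proof}) almost exactly: there too, $\mathcal{L}$ is the system of cubics through the Galois orbit $S=\{P_0,\dots,P_5\}$ of a degree-$6$ point, $\mathbb{F}_q$-reducible members are ruled out by the orbit-stability argument you give, and the unique geometrically reducible $\mathbb{F}_q$-member is identified as the ``diameter'' cubic $\overline{P_0P_3}\cup\overline{P_1P_4}\cup\overline{P_2P_5}$ via the same invariant-matching analysis. The genuine gap is precisely the step you defer: the existence, for \emph{every} prime power $q$, of a degree-$6$ closed point whose geometric points are in the required general position. This is not a routine verification to be filled in later; it is the entire arithmetic content of the theorem. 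Your proposed dimension count only gives existence for $q$ larger than some unspecified bound, since over a small $\mathbb{F}_q$ a bad locus of strictly smaller dimension can still contain every $\mathbb{F}_q$-rational (here, every degree-$6$) point of the parameter space; and ``exhibit an explicit orbit or do a refined point count for the finitely many small $q$'' is a plan rather than an argument -- without an effective constant you do not even know which $q$ must be checked. The paper closes exactly this hole by quoting \cite{AGR24}*{Theorem 1.1}, which produces a point $P\in\mathbb{P}^2(\mathbb{F}_{q^6})$ lying on no conic defined over $\mathbb{F}_q$, uniformly in $q$, via an intricate counting argument; with that citation (or a reproof of it) your write-up would be complete.

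Two further remarks. First, you can lighten your hypotheses: it suffices that no \emph{$\mathbb{F}_q$-rational} conic (including degenerate ones) passes through the orbit, which is the exact statement of \cite{AGR24}*{Theorem 1.1}; the conditions ``no three of the $P_i$ collinear'' and ``no geometric conic through all six'' then follow by short Frobenius/B\'ezout arguments, so they need not be imposed as separate constraints in the existence problem -- this matters, because each extra open condition makes the uniform-in-$q$ counting harder. Second, if you want to bypass the existence question entirely, the paper's second proof (Section~\ref{sec:second-proof}) does so: it takes the explicit system spanned by $F,G,H,T$ obtained from $\langle x^2y,\,y^2z,\,z^2x,\,xyz\rangle$ by the coordinate change attached to a normal basis $\alpha,\alpha^q,\alpha^{q^2}$ of $\mathbb{F}_{q^3}$, and reduces everything to the elementary Lemma~\ref{lem:main} that a geometrically reducible $ax^2y+by^2z+cz^2x+dxyz$ must satisfy $abc=0$. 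That route is fully explicit, works for all $q$ at once, and is a good model for how to make your ``delicate step'' disappear.
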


To further motivate the problem and explain its arithmetic origin, we explain why the analogue of Conjecture~\ref{conj:main} fails when the base field $\mathbb{F}_q$ is replaced by an algebraically closed field $\mathbb{K}$. Since a cubic form in three variables is described by 10 coefficients, the parameter space of cubic plane curves is $\mathbb{P}^9(\mathbb{K})$. Let $\mathcal{L} = \langle F_0, F_1, F_2, F_3 \rangle$ be a linear system where each $F_i \in \mathbb{K}[x,y,z]$ is a homogeneous polynomial of degree 3. 

The set of reducible cubic curves forms a 7-dimensional subvariety $Y$ of $\mathbb{P}^9(\mathbb{K})$. This can be seen as follows: any reducible cubic polynomial can be factored as \begin{equation}\label{eq:red-cubic} (a_0 x + a_1 y + a_2 z)(b_0 x^2 + b_1 y^2 + b_2 z^2 + b_3 xy + b_4 yz + b_5 zx).
\end{equation} The variety $Y$ is the image of the natural map $\mathbb{P}^2 \times \mathbb{P}^5 \to \mathbb{P}^9$ induced by the multiplication from \eqref{eq:red-cubic}. Since $\dim(Y) + \dim(\mathcal{L}) = 7 + 3 = 10$, the intersection $C \colonequals Y \cap \mathcal{L}$ has dimension at least 1 in $\mathbb{P}^9$. Because $\mathbb{K}$ is algebraically closed, we have $C(\mathbb{K}) \neq \emptyset$, meaning that $\mathcal{L}$ has at least one reducible $\mathbb{K}$-member. In particular, Conjecture~\ref{conj:main} does \emph{not} hold over $\mathbb{K}$.

For example, if we take $\mathbb{K} = \overline{\mathbb{F}_q}$, then any 3-dimensional linear system $\mathcal{L} = \langle F_0, F_1, F_2, F_3 \rangle \cong \mathbb{P}^3$ contains a reducible $\overline{\mathbb{F}_q}$-member. The subtlety of Conjecture~\ref{conj:main} lies in the fact that, when $\mathcal{L}$ is defined over $\mathbb{F}_q$, the variety $C$, which is generically a curve, may lack $\mathbb{F}_q$-points.
 In fact, Conjecture~\ref{conj:main} is equivalent to the following statement: there exists an $\mathbb{F}_q$-linear subspace $\mathcal{L} \cong \mathbb{P}^3$ in the parameter space $\mathbb{P}^9$ such that $Y \cap \mathcal{L}$ has no $\mathbb{F}_q$-points, where $Y$ is the locus of reducible cubics. Viewed through this lens, the difficulty of Conjecture~\ref{conj:main} is tied to finding a specific ``pointless" curve inside a large-dimensional projective space. Furthermore, the above analysis also shows that the proportion of geometricaly irreducible cubic plane curves defined over $\F_q$ tends to $1$ as $q\to\infty$ (since the number of geometrically \emph{reducible} plane cubics defined over $\F_q$ is $O(q^7)$, while we have $O(q^9)$ plane cubics defined over $\F_q$).

While we focus on the case of cubic plane curves in the present paper, the same question applies to hypersurfaces of degree $d$ in $\mathbb{P}^n$ for any $d$ and $n$. 

\begin{problem}\label{open-problem-1} Determine the maximum (projective) dimension of a linear system $\mathcal{L}$ of degree $d$ hypersurfaces in $\mathbb{P}^n$ such that each $\mathbb{F}_q$-member is geometrically irreducible.
\end{problem}

By \cite{AGR24}*{Theorem 1.3(d)}, every linear system of dimension $\binom{n+d-1}{n-1}$ has an $\mathbb{F}_q$-member that is reducible over $\mathbb{F}_q$. Hence, the answer to Problem~\ref{open-problem-1} is at most $\binom{n+d-1}{n-1}-1$. On the other hand, by \cite{AGR23}*{Theorem 2}, when $p=\operatorname{char}(\mathbb{F}_q) \nmid \gcd(d, n+1)$, there exists an $n$-dimensional linear system whose $\mathbb{F}_q$-members are all smooth, hence geometrically irreducible. Hence, the answer to Problem~\ref{open-problem-1} is at least $n$. We expect the true answer to Problem~\ref{open-problem-1} to be closer to the upper bound $\binom{n+d-1}{n-1}-1$. After all, we expect most geometrically reducible hypersurfaces defined over $\mathbb{F}_q$ to be reducible over $\mathbb{F}_q$. 

We also have a related open problem with the condition ``geometrically irreducible" relaxed to ``not containing a linear component over $\overline{\mathbb{F}_q}$."

\begin{problem}\label{open-problem-2} Determine the maximum (projective) dimension of a linear system $\mathcal{L}$ of degree $d$ hypersurfaces in $\mathbb{P}^n$ such that each $\mathbb{F}_q$-member has no linear factor.
\end{problem}

By definition, the answer to Problem~\ref{open-problem-1} is less than or equal to the answer to Problem~\ref{open-problem-2}. It is reasonable to expect that the two answers agree, at least for all sufficiently large $q$ (as a function of $n$ and $d$). The heuristic is that most reducible hypersurfaces (over $\overline{\mathbb{F}_q})$ have a linear factor. Note that Conjecture~\ref{conj:main} concerns the case $n=2$ and $d=3$ for which Problems~\ref{open-problem-1} and ~\ref{open-problem-2} coincide. 

While these open problems are new, we note that the study of reducible members in a linear system of algebraic hypersurfaces is rich in literature. One case that has been investigated thoroughly is the number of reducible (or totally reducible) hypersurfaces in a pencil of hypersurfaces \cites{Lor93, Vis93, PY08}. The setting between the cited work and the present work differs in a few places. We only consider $\mathbb{F}_q$-members while the previous work is about controlling reducibility over $\overline{\mathbb{F}_q}$-members. On the other hand, we do not restrict our attention to pencils and allow large-dimensional linear systems.

\medskip 

\textbf{Structure of the paper.} We provide two proofs for Theorem~\ref{thm:main}. In Section~\ref{sec:first-proof}, we provide a non-constructive proof in the spirit of the work done in \cite{AGR24}, while in Section~\ref{sec:second-proof} we provide an explicit construction of a $3$-dimensional linear system as desired for the conclusion of Theorem~\ref{thm:main}. We believe both proofs (which are quite different in their approach) could be useful for pursuing Conjecture~\ref{conj:main}.  Appendix A provides numerical evidence (computed using SageMath) that supports Conjecture~\ref{conj:main} for all $q\leq 11$.

\medskip 

\textbf{Acknowledgments.} We thank Dino Lorenzini and the referee for valuable comments on the paper.

\section{A non-constructive proof} \label{sec:first-proof}

In this section, we discuss the construction in our previous paper \cite{AGR24} joint with Reichstein in the special case of plane cubic curves; this allows us to provide a first (non-constructive) proof of our Theorem~\ref{thm:main}. 

We begin by recalling the motivation behind the main result of \cite{AGR24} specialized to our context. Recall that the parameter space of plane conics corresponds to the projective space $\mathbb{P}^5$, as each conic can be represented by the equation
$$
b_0 x^2 + b_1 y^2 + b_2 z^2 + b_3 xy + b_4 yz + b_5 zx= 0
$$ 
for some coefficients $b_0, \ldots, b_5$. Imposing the additional condition that the conic passes through a specific point $Q_1 = [x_1 : y_1 : z_1]$ introduces one linear constraint on the coefficients $b_0, \ldots, b_5$. As a result, the space of conics passing through $Q_1$ forms a projective subspace of dimension $4$, namely $\mathbb{P}^4$. Each time we require the conic to pass through a new point $Q$, we add another linear constraint on the coefficients. However, it is not guaranteed that these constraints will be linearly independent. We say that the points $Q_1, \ldots, Q_s$ (with $s<6$) are in \emph{general position} with respect to conics if the $\mathbb{F}_q$-vector space dimension of the space of conics passing through $Q_1, \ldots, Q_s$ is exactly $6 - s$. Over the algebraic closure, the set of such $s$-tuples forms a Zariski-dense open subset. In particular, when $s = 6$, if the six points are chosen from $\mathbb{P}^2(\overline{\mathbb{F}_q})$ in general position, then no conic passes through all of them. The main contribution of \cite{AGR24} is to show that points in general position can be modeled as Galois orbits in a suitable sense. In the specific case of conics, it is possible to construct a set of six points in general position as the Galois orbit of a single point of degree 6, that is, a point with coordinates in $\mathbb{F}_{q^6}$.

More precisely, \cite{AGR24}*{Theorem 1.1} asserts the existence of a point $P\in\mathbb{P}^2(\mathbb{F}_{q^6})$ such that $P$ is not contained in any degree $2$ curve $C$ over $\mathbb{F}_q$ \cite{AGR24}*{Theorem 1.1}. Equivalently, no conic defined over $\mathbb{F}_q$ contains the Galois orbit $S=\{P, P^{\sigma}, \ldots, P^{\sigma^{5}}\}$. Here, $P^{\sigma}$ denotes the image of the point $P$ under the Frobenius map $[x:y:z]\mapsto [x^q:y^q:z^q]$. For simplicity, let us write $P_i=P^{\sigma^{i}}$ so that $S=\{P_0, ..., P_5\}$. 

Next, we follow \cite{AGR24}*{Theorem 1.3(c)} to construct a linear system of cubics $\mathcal{L}\cong \mathbb{P}^3$ where each $\mathbb{F}_q$-member of $\mathcal{L}$ is irreducible over $\mathbb{F}_q$. We will show that the same linear system $\mathcal{L}$ has at most one geometrically reducible $\mathbb{F}_q$-members, establishing a proof of Theorem~\ref{thm:main}. To construct $\mathcal{L}$, recall that the dimension of the $\mathbb{F}_q$-vector space of cubic forms in $3$ variables is $10$. Imposing the condition that a cubic passes through a specific point imposes at most $1$ linear condition on the coefficients. Since $S=\{P_0, ..., P_5\}$ has $6$ points and $S$ is defined over $\mathbb{F}_q$ (despite the fact that each $P_i$ is not individually defined over $\mathbb{F}_q$), the $\mathbb{F}_q$-vector subspace of all cubics passing through $S$ has dimension at least $10-6=4$. Let $F_0, F_1, F_2, F_3$ denote four linearly independent cubic forms in $\mathbb{F}_q[x,y,z]$ each passing through all points of $S$. Let $\mathcal{L}=\langle F_0, F_1, F_2, F_3\rangle\cong\mathbb{P}^3$ denote the $3$-dimensional linear system of cubic curves passing through $S$. 

Let $C$ be a reducible cubic curve (over $\overline{\mathbb{F}_q}$) which is an $\mathbb{F}_q$-member of $\mathcal{L}$. There are two ways in which a reducible cubic $C=L\cup Q$ can pass through the set $S$:
\begin{enumerate}[(a)]
\item Let $L_{ij}$ be the line joining $P_i$ and $P_{j}$ and $Q$ can vary in $\mathbb{P}^1$-worth of conics passing through the remaining $4$ points.
\item Let $Q_{i}$ be the conic passing through $5$ points in the set $S\setminus \{P_i\}$. Then $L$ can vary in $\mathbb{P}^1$-worth of lines passing through the remaining point $P_{i}$.
\end{enumerate}
However, if $C$ is defined over $\mathbb{F}_q$, it must be the case that $C$ is a union of three $\mathbb{F}_{q^3}$-lines, Galois conjugated by $\operatorname{Gal}(\mathbb{F}_{q^3}/\mathbb{F}_q)$; note that $C$ is assumed to be geometrically reducible, while on the other hand, by \cite{AGR24}*{Theorem 1.3(c)}, $C$ is irreducible over $\F_q$ since the entire $\F_q$-linear space $\mathcal{L}$ consists of $\F_q$-irreducible plane cubics. It is straightforward to see that \emph{exactly one} one of these curves, namely $\overline{P_0 P_3}\cup \overline{P_1 P_4} \cup \overline{P_2 P_5}$, is defined over $\mathbb{F}_q$. Hence, all $\mathbb{F}_q$-members of $\mathcal{L}$ are irreducible over $\mathbb{F}_q$ and exactly one $\mathbb{F}_q$-member of $\mathcal{L}$ fails to be geometrically irreducible. This completes the proof of Theorem~\ref{thm:main}.

\section{An explicit construction}\label{sec:second-proof}

The first proof of Theorem~\ref{thm:main} relies on the existence of a point $P\in \mathbb{P}^2(\mathbb{F}_{q^6})$ which does not lie on any conic defined over $\mathbb{F}_q$. The proof of this assertion in \cite{AGR24}*{Theorem 1.1} was obtained by an intricate counting argument and hence is nonconstructive by its nature. In this section, we offer an alternative proof of Theorem~\ref{thm:main} which has the advantage of providing an explicit construction. This latter method offers a new perspective on the problem and suggests future avenues for addressing higher-degree systems (as in Problems~\ref{open-problem-1} and \ref{open-problem-2}) through explicit methods.

We start with a lemma on reducible cubic curves containing only the monomials $x^2y, y^2z, z^2x, xyz$.  

\begin{lemma}\label{lem:main} Suppose $ax^2y + by^2z + cz^2x + dxyz=0$ is a geometrically reducible cubic curve. Then $abc=0$.
\end{lemma}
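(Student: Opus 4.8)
The plan is to prove the contrapositive: assume $abc \neq 0$ and show the cubic $F = ax^2y + by^2z + cz^2x + dxyz$ is geometrically irreducible. Since $a,b,c$ are all nonzero, none of the variables $x,y,z$ can divide $F$, so $F$ has no linear factor that is a coordinate. The first step is to rule out \emph{any} linear factor: if $F = L \cdot Q$ with $L$ linear, then $F$ vanishes on the line $\{L = 0\}$, and I would parametrize that line and substitute into $F$ to derive constraints. The monomial structure is very rigid (only $x^2y$, $y^2z$, $z^2x$, $xyz$ appear), so I expect that forcing $F$ to contain a linear factor forces one of $a,b,c$ to vanish. Concretely, I would examine which coordinate points $[1:0:0]$, $[0:1:0]$, $[0:0:1]$ lie on $F$ and check the behavior of $F$ there; these are natural candidate singular/base points given the monomials present.

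The cleanest route is probably to analyze the possible factorization types directly. A geometrically reducible cubic is either (i) a product of three lines (possibly over $\overline{\mathbb{F}_q}$), or (ii) a product of a line and an irreducible conic. In either case $F$ has a linear factor over $\overline{\mathbb{F}_q}$, so it suffices to show $F$ has no linear factor when $abc \neq 0$. So I would suppose $\alpha x + \beta y + \gamma z$ divides $F$ and perform polynomial division (or equate coefficients of the resulting quotient conic). Matching the $10$ coefficients of the cubic against the $4$ nonzero monomials $x^2y, y^2z, z^2x, xyz$ gives an overdetermined system: six of the ten coefficients of $(\alpha x + \beta y + \gamma z)(\text{general conic})$ must vanish (namely those of $x^3, y^3, z^3, xy^2, yz^2, zx^2$). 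The main computation is to show this forces $\alpha\beta\gamma = 0$, and then that setting any one of $\alpha, \beta, \gamma$ to zero propagates through the remaining equations to kill one of $a, b, c$.

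I expect the main obstacle to be the bookkeeping in the coefficient-matching system rather than any conceptual difficulty; the system has enough symmetry (a cyclic $x \to y \to z \to x$ action permuting $a \to b \to c \to a$) that I would exploit this to reduce the casework to essentially one case. After establishing $\alpha\beta\gamma = 0$, by the cyclic symmetry I can assume $\gamma = 0$, and then track how the vanishing of the $x^3$, $y^3$, $z^3$ coefficients together with the structure of the remaining equations forces the conic factor and the value of one of $a,b,c$. An alternative, possibly slicker, approach would be to compute the singular points of $\{F = 0\}$ and observe that a geometrically reducible cubic must be singular along its components' intersections; checking that $F$ is smooth (or has the wrong singularity structure to be reducible) when $abc \neq 0$ via the partial derivatives $F_x = 2axy + cz^2 + dyz$, $F_y = ax^2 + 2byz + dxz$, $F_z = by^2 + 2czx + dxy$ may give a shorter route, though the characteristic-$2$ and characteristic-$3$ cases would then need separate care. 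I would present the direct factorization argument as the primary proof, since it avoids characteristic issues.
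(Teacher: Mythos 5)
Your proposal is correct and takes essentially the same approach as the paper: both reduce to ruling out a linear factor $L$ and match coefficients of $L\cdot Q$ against the four monomials, exploiting the cyclic symmetry $x\to y\to z\to x$ (which the paper invokes implicitly via its ``without loss of generality'' normalization $L = x+\beta y+\gamma z$). The bookkeeping you defer does close exactly as you predict: if $\alpha\beta\gamma\neq 0$ then the vanishing of the $x^3,y^3,z^3,xy^2,yz^2,zx^2$ coefficients forces $Q=0$, a contradiction; and with (say) $\gamma=0$, the $y^2z$, $yz^2$, $z^2x$ coefficients give $\beta\neq 0$, then $q_{z^2}=0$, then $c=\alpha q_{z^2}=0$, contradicting $abc\neq 0$.
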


\begin{proof}
The reducible cubic has a linear factor $L$. Without loss of generality, $L=x+\beta y + \gamma z$ for some scalars $\beta, \gamma$. If $a=0$, then we are done. Hence, we may assume $a=1$ after scaling. We have:
\begin{equation}\label{eq:reducible-cubic}
x^2 y + b y^2 z + c z^2 x + d xyz =  L Q 
\end{equation}
for some quadratic factor $Q$. We match the coefficients on both sides of \eqref{eq:reducible-cubic} to prove that $b=0$. We proceed in five steps:
\begin{enumerate}
\item The cubic has no $x^3$ term, so $Q$ has no $x^2$ term. The term $x^2 y$ can only be constructed from multiplying $x$ from $L$ with a term in $xy$ from $Q$; thus, the coefficient of $xy$ in $Q$ must be $1$.
\item If $\beta\ne 0$, then $Q$
has no $y^2$ term; in that case, $LQ$ has the term $(\beta y)\cdot xy$ which leads to the term $xy^2$ in the cubic that cannot be canceled, a contradiction. Therefore, $\beta=0$.
\item The cubic has no $xy^2$ term and $\beta = 0$, so $Q$ has no $y^2$ term. The cubic has no $x^2z$ term and $Q$ has no $x^2$ term, so $Q$ has no $xz$ term. 
\item So, $Q=xy + \delta_1 yz + \delta_2 z^2$ and $L=x+\gamma z$. From \eqref{eq:reducible-cubic}, we see $\gamma \delta_1 =0$. If $\gamma = 0$, then $x$ divides the cubic, implying that $b=0$, as desired.
\item If $\gamma\neq 0$, then we have $\delta_1=0$. In this case, \eqref{eq:reducible-cubic} reads:
$$
(x + \gamma z)(xy + \delta_2 z^2) = x^2y + by^2z + cz^2x + dxyz.
$$
We obtain $b=0$, as desired. 
\end{enumerate}
Thus, any geometrically reducible cubic of the form $ax^2y + by^2z + cz^2x + dxyz=0$ satisfies $abc=0$.
\end{proof}

We will now present the second proof of our main theorem.

\begin{proof}[Proof of Theorem~\ref{thm:main}] Consider the linear system $\mathcal{L}_1 = \langle x^2 y, y^2 z, z^2 x, xyz\rangle$. By the Normal Basis Theorem, there exists an element $\alpha\in\mathbb{F}_{q^3}$ such that $\alpha, \alpha^q, \alpha^{q^2}$ forms a basis of $\mathbb{F}_{q^3}$ as an $\mathbb{F}_q$-vector space. We construct a new linear system from $\mathcal{L}_1$ where $x$, $y$, and $z$ are replaced by appropriate linear forms. Let
\begin{align*}
F &= (\alpha x + \alpha^q y + \alpha^{q^2} z)^2 (\alpha^q x + \alpha^{q^2} y + \alpha z), \\ 
G &= (\alpha^q x + \alpha^{q^2} y + \alpha z)^2 (\alpha^{q^2} x + \alpha y + \alpha^q z),  \\ 
H &= (\alpha^{q^2} x + \alpha y + \alpha^q z)^2(\alpha x + \alpha^q y + \alpha^{q^2} z), \\  
T &= (\alpha x + \alpha^q y + \alpha^{q^2} z)(\alpha^q x + \alpha^{q^2} y + \alpha z)(\alpha^{q^2} x + \alpha y + \alpha^q z).
\end{align*}
Consider the linear system $\mathcal{L}_2=\langle F, G, H, T\rangle$. The Frobenius map $t\mapsto t^{q}$ sends $F\mapsto G\mapsto H\mapsto F$ and fixes $T$. Thus, the linear system $\mathcal{L}_2$ is defined over $\mathbb{F}_q$, meaning that we can find new generators $R_0, R_1, R_2, R_3\in \mathbb{F}_q[x,y,z]$ with $\deg(R_i)=3$ such that $\mathcal{L}_2=\langle R_0, R_1, R_2, R_3\rangle$. We claim that each $\mathbb{F}_q$-member of $\mathcal{L}_2$ is geometrically irreducible except the member $T\in\mathcal{L}_2$ which is a union of three lines conjugated by $\operatorname{Gal}(\mathbb{F}_{q^3}/\mathbb{F}_q)$. Indeed, we have a new coordinate system induced by the linear transformation:
\begin{align*}
    x'&=\alpha x+\alpha^q y + \alpha^{q^2} z \\
    y'&=\alpha^q x+\alpha^{q^2} y + \alpha z \\ 
    z'&=\alpha^{q^2} x+\alpha y + \alpha^q z
\end{align*}
Applying Lemma~\ref{lem:main} in the new coordinate system, we see that any geometrically reducible $\mathbb{F}_q$-member of $\mathcal{L}_2$ given by
\begin{align*}
a F + b G + c H + d T = 0,
\end{align*}
satisfies $abc=0$. After applying the Frobenius map $t\mapsto t^q$ twice and using the fact that $T$ is defined over $\mathbb{F}_q$, we get two additional equations:
\begin{align*}
a G + b H + c F + d T &= 0, \\ 
a H + b F + c G + d T &= 0 
\end{align*}
Since $abc=0$, at least one of $a, b, c$ is zero. The three equations above and the linear independence of $F, G, H, T$ imply $a=b=c=0$. Hence, the only geometrically reducible $\mathbb{F}_q$-member of $\mathcal{L}_2$ is $\{T=0\}$. Note that $\{T=0\}$ is irreducible over $\mathbb{F}_q$. Thus, the linear system $\mathcal{L}_2$ satisfies the desired properties.
\end{proof}

\section*{Appendix A: computational evidence for the conjecture}

We verified Conjecture~\ref{conj:main} for all $q\leq 11$ using SageMath \cite{SageMath}. It suffices to randomly generate a cubic linear system $\mathcal{L}=\langle F_0, F_1, F_2, F_3\rangle$ until all $\mathbb{F}_q$-members of $\mathcal{L}$ are geometrically irreducible. The following algorithm formalizes this procedure.

\begin{algorithm}[H]
\caption{Verifying Conjecture~\ref{conj:main} for $q \leq 11$}
\SetAlgoLined
\textbf{Input:} Prime power $q$, the base field $\mathbb{F}_q$.\\
\textbf{Output:} A cubic linear system $\mathcal{L} = \langle F_0, F_1, F_2, F_3 \rangle$ such that all $\mathbb{F}_q$-members are geometrically irreducible.\\

Repeat until all $\mathbb{F}_q$-members of $\mathcal{L}$ are geometrically irreducible: \\
\quad Randomly generate coefficients $c_0, \ldots, c_9 \in \mathbb{F}_q$ to define a cubic form 
\[
F = c_0 x^3 + c_1 y^3 + c_2 z^3 + c_3 x^2 y + c_4 x y^2 + c_5 y^2 z + c_6 y z^2 + c_7 z^2 x + c_8 z x^2 + c_9 x y z.
\]
\quad Construct four independent forms $F_0, F_1, F_2, F_3$ as above.\\
\quad Define $\mathcal{L} = \langle F_0, F_1, F_2, F_3 \rangle$.\\
\quad For each $\mathbb{F}_q$-member of $\mathcal{L}$ parametrized by $\vec{a}=(a_0, a_1, a_2, a_3)$ with $a_i\in\mathbb{F}_q$, set:
\[
F_{\vec{a}} = a_0 F_0 + a_1 F_1 + a_2 F_2 + a_3 F_3 \quad \text{where} \quad a_0, a_1, a_2, a_3 \in \mathbb{F}_q:
\]
\qquad If $\{F_{\vec{a}}=0\}$ is geometrically reducible, discard $\mathcal{L}$ and return to Step 4.\\
Return $\mathcal{L}$.
\end{algorithm}

\medskip 

The following table lists the successful linear systems for $q\in \{2,3,4,5,7,8,9,11\}$. 
\medskip 

\renewcommand{\arraystretch}{1.5}
\setlength\LTleft{-1cm} % Align table to the left (optional)
\setlength\LTright{\fill} % Align table to the right (optional)
\small
\begin{longtable}{|c|c|}
 \hline
 \multicolumn{2}{|c|}{$q=2$} \\ 
 \hline
 $F_0=x^2 y+x^2 z+y^2 z$ & $F_2=x y^2+y^3+x y z+x z^2$ \\ 
 $F_1=x^3+yz^2 $ & $F_3=x^2 y+xy^2+x z^2+z^3$ \\ 
 \hline \hline 
 \multicolumn{2}{|c|}{$q=3$} \\ 
 \hline
 $F_0=y^3 + x^2z + y^2z + yz^2 + z^3$ & $F_2=x^3 - x^2 y - x y^2 + x z^2 - y z^2$ \\ 
 $F_1=x^3 - xy^2 + y^2 z - xz^2 + yz^2 - z^3$ & $F_3=-x^3 - x^2 y + y^3 + x^2 z - xz^2$ \\ 
 \hline \hline
 \multicolumn{2}{|c|}{$q=4$} \\
 \hline
 $F_0=x^2y + y^3 + x^2z + xyz + yz^2$ & $F_2=x^3 + xy^2 + y^2z + xz^2 + yz^2$ \\ 
 $F_1=x^2y + xyz + y^2 z + z^3$ & $F_3=x^3 + yz^2$ \\  
 \hline \hline 
 \multicolumn{2}{|c|}{$q=5$} \\
 \hline
 $F_0=2x^2y + xy^2 + y^3 + xz^2 + yz^2$ & $F_2=2x^3 + x^2y + xy^2 + y^3 - 2x^2z - xyz - y^2z + xz^2 + 2yz^2$ \\ 
 $F_1=x^2y + 2xy^2 - 2y^3 - 2x^2z + 2y^2z - 2xz^2 - yz^2$ & $F_3=-2x^2y - 2xy^2 - x^2z - 2xyz + y^2z - xz^2 + 2z^3$ \\ 
 \hline \hline 
 \multicolumn{2}{|c|}{$q=7$} \\
 \hline
 $F_0=-x^3 - 3xy^2 + y^3 + 3y^2z + xz^2 - 2yz^2 + 3z^3$ & $F_2=x^3 - 2x^2y + y^3 - x^2z - 3xyz - 2y^2z + xz^2 - 3z^3$ \\ 
 $F_1=3x^3 - 3x^2y - 3xy^2 - 3y^3 + xyz - 2y^2z - 2z^3$ & $F_3=-3x^3 - 2x^2y + 2xy^2 + 2y^3 - 2x^2z - 2y^2z - xz^2 + 3z^3$ \\ 
 \hline \hline 
 \multicolumn{2}{|c|}{$q=8$} \\
 \hline
 $F_0=x^2y + y^2z + xz^2 + yz^2$ & $F_2=x^3 + x^2y + y^2z + xz^2 + z^3$ \\ 
 $F_1=x^2y + xy^2 + xz^2 + z^3$ & $F_3=x^2y + y^3 + x^2z + xyz + xz^2 + yz^2 + z^3$ \\ 
 \hline \hline 
 \multicolumn{2}{|c|}{$q=9$} \\
 \hline
 $F_0=-x^3 + x^2y + y^3 + x^2z + xyz - y^2z + xz^2 - yz^2$ & $F_2=x^2y + xy^2 + x^2z + xz^2 + yz^2 + z^3$ \\ 
 $F_1=xy^2 - x^2z - xyz - y^2z - z^3$ & $F_3=xy^2 - y^3 - x^2z + y^2z - yz^2$ \\ 
 \hline \hline 
 \multicolumn{2}{|c|}{$q=11$} \\
 \hline
 $F_0=-3x^3 - 5xy^2 + 2x^2z + 4y^2z - 2xz^2 - 4z^3$ & $F_2=5x^3 + 3x^2y + y^3 - 2x^2z - 5xyz - y^2z - 5xz^2 - 3yz^2 - 4z^3$ \\ 
 $F_1=x^3 + xy^2 + 2y^3 + 3x^2z + 4xyz - y^2z - 3xz^2 + 2yz^2 - z^3$ & $F_3=2x^3 - 3x^2y + 4xy^2 + 2y^3 - 5x^2z + y^2z - 2xz^2 - yz^2 + z^3$ \\ 
 \hline 
 \end{longtable}

Interestingly, the linear system we found for $\mathbb{F}_8$ has coefficients in $\mathbb{F}_{2} = \{0, 1\}$, which means that the table entry corresponding to $q = 8$ also supports Conjecture~\ref{conj:main} for $q = 2, 4, 8$. An intriguing question arises: for how large values of $k$ can we find a linear system over $\mathbb{F}_{q}$ whose $\mathbb{F}_{q^k}$-members (not just $\mathbb{F}_q$-members) are geometrically irreducible? Such a result would provide an even stronger conclusion than Conjecture~\ref{conj:main}.

\begin{bibdiv}
\begin{biblist}

\bib{AGR23}{article}{
    AUTHOR = {Asgarli, Shamil},
    AUTHOR = {Ghioca, Dragos}, 
    AUTHOR = {Reichstein, Zinovy},
     TITLE = {Linear families of smooth hypersurfaces over finitely
              generated fields},
   JOURNAL = {Finite Fields Appl.},
  %FJOURNAL = {Finite Fields and their Applications},
    VOLUME = {87},
      YEAR = {2023},
     PAGES = {Paper No. 102169, 10},
      ISSN = {1071-5797,1090-2465},
       %DOI = {10.1016/j.ffa.2023.102169},
      %URL = {https://doi.org/10.1016/j.ffa.2023.102169},
}

\bib{AGR24}{article}{
      AUTHOR = {Asgarli, Shamil}, 
      AUTHOR = {Ghioca, Dragos},
      AUTHOR = {Reichstein, Zinovy},
      TITLE={Linear system of hypersurfaces passing through a Galois orbit}, 
       JOURNAL = {Res. Number Theory},
      %Eprint={2310.10361},
     %archivePrefix={arXiv},
      %primaryClass={math.AG},
     VOLUME = {10},
      YEAR = {2024},
    NUMBER = {4},
     PAGES = {Paper No. 84, 16},
      ISSN = {2522-0160},
}

\bib{AGY23}{article}{
    AUTHOR = {Asgarli, Shamil},
    AUTHOR = {Ghioca, Dragos}, 
    AUTHOR = {Yip, Chi Hoi},
     TITLE = {Existence of pencils with nonblocking hypersurfaces},
   JOURNAL = {Finite Fields Appl.},
  %FJOURNAL = {Finite Fields and their Applications},
    VOLUME = {92},
      YEAR = {2023},
     PAGES = {Paper No. 102283, 11},
      %ISSN = {1071-5797,1090-2465},
   %MRCLASS = {14N05 (14C21 14G15 14J70 51E21)},
 %MRNUMBER = {4632692},
%MRREVIEWER = {Raquel\ Mallavibarrena},
       %DOI = {10.1016/j.ffa.2023.102283},
       %URL = {https://doi.org/10.1016/j.ffa.2023.102283},
}

\bib{Lor93}{article}{
    AUTHOR = {Lorenzini, Dino},
     TITLE = {Reducibility of polynomials in two variables},
   JOURNAL = {J. Algebra},
  %FJOURNAL = {Journal of Algebra},
    VOLUME = {156},
      YEAR = {1993},
    NUMBER = {1},
     PAGES = {65--75},
      %ISSN = {0021-8693,1090-266X},
   %MRCLASS = {14C21 (12E05)},
  %MRNUMBER = {1213785},
%MRREVIEWER = {Wolfgang\ Michael\ Ruppert},
       %DOI = {10.1006/jabr.1993.1063},
       %URL = {https://doi.org/10.1006/jabr.1993.1063},
}

\bib{PY08}{article}{
    AUTHOR = {Pereira, J. V.},
    AUTHOR = {Yuzvinsky, S.},
     TITLE = {Completely reducible hypersurfaces in a pencil},
   JOURNAL = {Adv. Math.},
  %FJOURNAL = {Advances in Mathematics},
    VOLUME = {219},
      YEAR = {2008},
    NUMBER = {2},
     PAGES = {672--688},
      ISSN = {0001-8708,1090-2082},
   %MRCLASS = {14C21 (14D05 14D06 52C35)},
  %MRNUMBER = {2435653},
%MRREVIEWER = {Dmitry\ Kerner},
       %DOI = {10.1016/j.aim.2008.05.014},
       %URL = {https://doi.org/10.1016/j.aim.2008.05.014},
}

\bib{SageMath}{manual}{
  Label = {Sage21},
  %Key= {SageMath},
  Author       = {{The Sage Developers}},
  Title        = {{S}ageMath, the {S}age {M}athematics {S}oftware {S}ystem ({V}ersion 9.4)},
  note         = {{\tt https://www.sagemath.org}},
  Year         = {2021},
}

\bib{Vis93}{article}{
    AUTHOR = {Vistoli, Angelo},
     TITLE = {The number of reducible hypersurfaces in a pencil},
   JOURNAL = {Invent. Math.},
  %FJOURNAL = {Inventiones Mathematicae},
    VOLUME = {112},
      YEAR = {1993},
    NUMBER = {2},
     PAGES = {247--262},
      %ISSN = {0020-9910,1432-1297},
   %MRCLASS = {14C21 (14D05)},
  %MRNUMBER = {1213102},
%MRREVIEWER = {Burt\ Totaro},
       %DOI = {10.1007/BF01232434},
       %URL = {https://doi.org/10.1007/BF01232434},
}

\end{biblist}
\end{bibdiv}

\end{document}